\documentclass[a4paper,12pt]{amsart}
\sloppy
\usepackage{cmap}

\usepackage[T2A]{fontenc}
\usepackage[utf8]{inputenc}

\textwidth=16cm \textheight=23cm
\addtolength{\topmargin}{-40pt}
\addtolength{\oddsidemargin}{-2cm}
\addtolength{\evensidemargin}{-2cm}

\usepackage{amssymb,latexsym,amsmath,amsfonts,bbm,amsthm,amscd,amsbsy}
\usepackage{amsthm}
\usepackage{hyperref}

\usepackage{url}

\makeatletter 
\@addtoreset{equation}{subsection}
\makeatother

\DeclareMathOperator{\rk}{rk}
\DeclareMathOperator{\Aut}{Aut}

\DeclareMathOperator{\Int}{Int}
\DeclareMathOperator{\Hom}{Hom}
\DeclareMathOperator{\Cl}{Cl}
\DeclareMathOperator{\GL}{GL}
\DeclareMathOperator{\Div}{Div}

\newcommand{\Z}{\mathbb{Z}}
\renewcommand{\C}{\mathbb{C}}
\renewcommand{\P}{\mathbb{P}}
\newcommand{\Q}{\mathbb{Q}}
\newcommand{\CC}{\mathfrak{C}}
\renewcommand{\S}{\mathfrak{S}}

\newtheorem{proposition}[subsection]{Proposition}

\newtheorem{theorem}[subsection]{Theorem}
\newtheorem{corollary}[subsection]{Corollary}
\newtheorem{lemma}[subsection]{Lemma}

\theoremstyle{definition}
\newtheorem{definition}[subsection]{Definition}
\swapnumbers

\newtheoremstyle{dotless}{}{}{\rm}{}{\sc}{}{ }{}

\theoremstyle{dotless}
\newtheorem{case}[subsection]{Case:}
\newtheorem{scase}[equation]{Subcase:}

\title{Toric $G$-Fano threefolds}
\author{Arman Sarikyan}
\address{\textnormal{School of Mathematics, The University of Edinburgh, Edinburgh, UK.}
\newline
\textnormal{\url{ar.sarikyan@gmail.com}}}
\date{}

\begin{document}
\maketitle
	\begin{abstract}
We classify toric Fano threefolds having at worst terminal singularities such that the rank of the $G$-invariant part of the class group equals one, where $G$ is a finite group acting on the variety by automorphisms.
	\end{abstract}

\section{Introduction.}	
	Let $X$ be an algebraic variety over the field of complex numbers $\C$ and let $G$ be a finite group. We say that $X$ is a $G$-{\itshape variety} if the action of $G$ on $X$ is defined by a homomorphism $G\to \Aut(X)$.
	
	A $G$-variety $X$ is called $G$-{\itshape Fano} variety if singularities of $X$ are not worse than terminal, the anticanonical class $-K_X$ is ample and the rank of the $G$-invariant part of the class group $\Cl(X)^G$ equals one. Such varieties appear naturally in the study of many problems of the equivariant birational geometry (see \cite{Prokhorov2009e}, \cite{cheltsov2020toric}, \cite{cheltsov2015cremona}). Partial results on the classification of the $G$-Fano varieties were obtained in \cite{Prokhorov-GFano-1}, \cite{Prokhorov-GFano-2}, \cite{Prokhorov-planes}. 
	
	A \textit{toric variety} is an algebraic variety $X$ containing a torus $T\cong (\C^*)^n$ as a dense Zariski open subset such that an action of $T$ on itself extends to an algebraic action on $X$. 
	
	In this work we classify the toric $G$-Fano threefolds. The main result obtained is the following.

	\begin{theorem}\label{main}
		Let $X$ be a toric $G$-Fano threefold. Then there are the following possibilities:
		\begin{center}
			\begin{tabular}{|c|c|c|c|c|}
				\hline
				$X$ & $\rho(X)$ & $(-K_X)^3$ & g &\#\\ \hline
				$\P^3$        & $1$  & $64$ & $33$ & $4$ \\ \hline
				$\P^3/\boldsymbol{\mu}_5$ & $1$  & $64/5$ & $5$ & $1$ \\ \hline
				$\P(1,1,1,2)$ & $1$  & $125/2$ & $32$ & $7$ \\ \hline
				$\P(1,1,2,3)$ & $1$  & $343/6$ & $29$ & $8$ \\ \hline
				$\P(1,2,3,5)$ & $1$  & $1331/30$ & $22$ & $6$ \\ \hline
				$\P(1,3,4,5)$ & $1$  & $2197/60$ & $18$ & $5$ \\ \hline
				$\P(2,3,5,7)$ & $1$  & $4913/210$ & $11$ & $2$ \\ \hline
				$\P(3,4,5,7)$ & $1$  & $6859/420$ & $7$ & $3$ \\ \hline
				\eqref{pyr1} & $1$ & $343/12$ & $14$ & $19$\\ \hline
				The quadric cone in $Q\subset\P^4$ with one singular point & $1$ & $54$ & $28$ & $32$ \\ \hline
				\eqref{pyr3} & $1$ & $125/3$ & $21$ & $33$\\ \hline
				\eqref{81/2} & $3$ & $81/2$ & $21$ & $92$\\ \hline
				$\P^1 \times \P^1 \times \P^1$ & $3$ & $48$ & $25$ & $62$ \\ \hline
				$(\P^1 \times \P^1 \times \P^1)/\boldsymbol{\mu}_2$ & $3$ & $24$ & $11$ & $47$ \\ \hline
				The divisor of type $(1,1,1,1)$ in $(\P^1)^4$ & $4$ & $24$ & $13$ & $625$ \\ \hline
				The complete intersection of two quadrics $V_{2,2} \subset \P^5$ & $5$ & $32$ & $17$ & $297$ \\ \hline
		
			\end{tabular}
		\end{center}
		where $g:=\dim |-K_X|-1$ is the genus of Fano variety $X$, $\rho(X)$ is the rank of the Picard group of $X$,  $\#$ is the number in \cite{GRD}, $(\P^1)^n$ denotes the product of $n$ copies of $\P^1$ and $\boldsymbol{\mu}_2$ is generated by the involution that acts diagonally and faithfully on each factor.
	\end{theorem}
	
	All the toric Fano threefolds having at worst terminal singularities were classified by A. Kasprzyk. The classification is given purely in terms of convex polytopes associated with the varieties. One can find the classification algorithm of A. Kasprzyk in \cite{kasprzyk2006toric} and its output is available in \cite{GRD}.
	
	{\bfseries Acknowledgments.} I am grateful to Yu. Prokhorov for introducing me to this topic and for invaluable advice. I am also grateful to C. Shramov for useful conversations, to I. Cheltsov for pointing me out a gap in the earlier version of Lemma \ref{gap} and to the referee for useful comments. This work was supported by the Foundation for the Advancement of Theoretical Physics and Mathematics ``BASIS''.
	
	\section{Preliminaries.}
	
	\subsection{Notation} Everywhere below we use the following notation.
	\begin{itemize}
		\item $N$ is a lattice of one-parameter subgroups of a torus; 
		\item $M$ is a character lattice of a torus, i.e. $M=\Hom(N,\mathbb{Z})$;
		\item $N_\Q:= N \otimes \Q$;
		\item $M_\Q:= M \otimes \Q$; 
		\item $\Sigma$ is a fan in $N_\Q$;
		\item $X_{\Sigma}$ is a toric variety associated with $\Sigma$;
		\item $\Div_T(X_\Sigma)$ is the group of $T$-invariant Weil divisors on $X_\Sigma$;
		\item $\Div_T(X_\Sigma)_\Q := \Div_T(X_\Sigma) \otimes \Q$;
		\item $\Cl(X_\Sigma)_\Q:=\Cl(X_\Sigma) \otimes \Q$;
		\item $v(P)$ is the number of vertices of $P$;
		\item $\Int(P)$ is the set of the interior $N$-valued points of $P$;
		\item $\Aut(X_\Sigma)$ is the group of automorphisms of $X_{\Sigma}$;
		\item $\CC_n$ is the cyclic group of order $n$;
		\item $\S_n$ is the symmetric group on $n$ elements;
		\item $V/\boldsymbol{\mu}_n$ is a quotient of a toric variety $V$ by a cyclic group of order $n$ defined by the torus quotient.
	\end{itemize}
	
	\subsection{Divisors on the Toric Varieties.} 
	Denote by $\Sigma(n)$ the set of $n$-dimensional subcones of $\Sigma$. Then every one dimensional subcone (ray) $r \in \Sigma(1)$ is associated with a $T$-invariant Weil divisor $D_r$. Moreover, every Weil divisor on $X_\Sigma$ is linearly equivalent to some $T$-invariant divisor. Thus,
	$$
	\Div_T(X_\Sigma)= \bigoplus \limits_{r \in \Sigma(1)} \Z D_r.
	$$
	
	\begin{theorem}[{\cite[Theorem 4.1.3]{cox2011toric}}]
		For any projective toric variety $X_{\Sigma}$ the following sequence is exact:
		\begin{equation}\label{exact}
		\begin{CD}
		0 @>>> M @>>> \Div_T(X_\Sigma) @>>> \Cl(X_\Sigma) @>>> 0.
		\end{CD}
		\end{equation}
	\end{theorem}
	
	\subsection{Toric Fano Varieties.}
	
	\begin{definition}
		A convex lattice polytope $P \subset N_\Q$ is a \textit{Fano polytope} if $O \in \Int(P)$, where $O$ is the origin of $N$, and the vertices of $P$ are all primitive elements of $N$. 
	\end{definition}
	
	\begin{definition}[{\cite[Definition 3.7.2]{kasprzyk2006toric}}]
		Let $P$ be an $n$-dimensional Fano polytope.
		\renewcommand\labelenumi{\rm (\arabic{enumi})}
		\renewcommand\theenumi{\rm (\arabic{enumi})}
		\begin{enumerate}
			\item 
			If the vertices of any facet of $P$ form a $\Z$-basis of $N$, then $P$ is called a \textit{regular} (or smooth) Fano polytope;
			\item 
			If every facet of $P$ is an $(n-1)$-simplex, then $P$ is called a \textit{simplicial} (or $\Q$-factorial) Fano polytope;
			\item 
			If the dual polytope $P^* \subset M_{\Q}$ is also a lattice polytope, then $P$ is called a \textit{reflexive} (or Gorenstein) Fano polytope;
			\item 
			If the only lattice points on or in $P$ consist of the vertices and the origin, then $P$ is called a \textit{terminal} Fano polytope;
			\item 
			If the only interior lattice point of $P$ is the origin, then $P$ is called a \textit{canonical} Fano polytope.
		\end{enumerate}
	\end{definition}
	
	There is a correspondence between isomorphism classes of toric Fano varieties and isomorphism classes of Fano polytopes \cite{kasprzyk2006toric}.

	\begin{lemma}\label{polygons}
		Up to the action of $\GL(2,\Z)$ there are exactly two convex lattice polygons $V \subset \Z^2$ such that there are no lattice points on the boundary of $V$ except the vertices and there are no interior lattice points. These are a triangle and a convex quadrangle.
	\end{lemma}
	
	\begin{proof}
		Without loss of generality, we can assume that the points $(0,0)$,  $(1,0)$, $(0,1)$ are consecutive vertices of $V$. Suppose $V$ is not a triangle. Then the point $(1,1)$ must be the vertex of $V$, otherwise, due to the convexity, it would be an interior point of $V$. Hence, again due to the convexity of $V$, the consecutive vertex must lie on the line $(0,n)$ for $n\in \Z$. But there no lattice points on the boundary of $V$, thereby $n=1$.
	\end{proof}
	
	\begin{corollary} \label{facets_corollary}
		If $P$ is a $3$-dimensional terminal Fano polytope, then $P$ can have only triangle or quadrangle facets.
	\end{corollary}
	
	\begin{lemma}[{\cite[Lemma 1]{Batyrev1981}}]	\label{lemma_bat}
		If $P$ is a $3$-dimensional terminal Fano polytope, then $v(P)\leq 14$.
	\end{lemma}
	
	Note that the terminal Fano polytopes with $14$ vertices do exist.
	
	Recall that {\itshape Archimedean solid} is a convex solid composed of regular polygons.
	
	\begin{theorem}[\cite{Gruenbaum1984}]\label{grum}
		Let $P$ be a convex $3$-dimensional polytope, $G$ be a group acting transitively on the vertices of $P$. Then $P$ is combinatorially equivalent to either a Platonic solid, an Archimedean solid, a prism or an antiprism.
	\end{theorem}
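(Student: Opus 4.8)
The assertion is the classical combinatorial classification of vertex-transitive (``isogonal'') convex $3$-polytopes, and the plan is to reduce everything to the local combinatorial data at a single vertex and then run a finite case analysis. I would begin by noting that any combinatorial automorphism of $P$ preserves vertex degrees, so transitivity of $G$ forces all vertices to have one common degree $q$, necessarily $q\ge 3$. Writing $V,E,F$ for the numbers of vertices, edges and faces, double counting gives $2E=qV$, while Euler's relation $V-E+F=2$ combined with $3F\le 2E$ (every $2$-face has at least three edges) yields $V(6-q)\ge 12$; hence $q\in\{3,4,5\}$.

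Next I would attach to each vertex $v$ its \emph{vertex type}: the cyclic word $\tau_v=(p_1,\dots,p_q)$ listing the sizes of the faces around $v$ in the rotational order of the surface. A combinatorial automorphism of a $3$-polytope carries the rotation system at $v$ to the one at its image up to a global reversal of orientation, so transitivity of $G$ makes $\tau_v$ independent of $v$ up to cyclic rotation and reversal; call it $\tau$, with $m_k$ the number of occurrences of $k$ in $\tau$. Double counting vertex--face incidences gives $k f_k=m_k V$ for the number $f_k$ of $k$-gonal faces, and substituting $F=\sum_k f_k$, $2E=qV$ into Euler's relation produces $V\bigl(1-\tfrac q2+\sum_k m_k/k\bigr)=2$, which determines $V$ from $\tau$; requiring $V$ and all $f_k$ to be positive integers and $P$ to be realizable then reduces $\tau$ to the list below. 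The heart of the proof is to run the three cases $q=3,4,5$, listing the admissible multisets $\{p_1,\dots,p_q\}$, then the cyclic orders $\tau$ on each, and discarding those not realized by a convex polytope. The tools are (i) a parity/alternation constraint --- if a face $f$ is flanked at each of its vertices by two faces of distinct sizes $a\ne b$, then the faces around $\partial f$ alternate $a,b,a,b,\dots$, so $|f|$ is even --- and (ii) packing arguments around a face of maximal size combined with the computed value of $V$. Case $q=5$ is immediate: the only admissible types are $(3,3,3,3,3)$, $(3,3,3,3,4)$, $(3,3,3,3,5)$, giving the icosahedron and the two snub solids. For $q=3$ one obtains $(3,3,3),(4,4,4),(5,5,5)$ (tetrahedron, cube, dodecahedron), the family $(4,4,n)$, $n\ge 3$ (the $n$-gonal prisms), and the seven Archimedean types with trivalent vertices; for $q=4$, $(3,3,3,3)$ (octahedron), the family $(3,3,3,n)$, $n\ge 3$ (the $n$-gonal antiprisms), and the four Archimedean types with tetravalent vertices. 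In each case Steinitz's theorem (with the standard uniqueness of the face lattice of a $3$-connected planar graph) shows the combinatorial type is pinned down by $\tau$ and the transitivity of $G$, which is the asserted conclusion.

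The main obstacle, where essentially all the work lies, is the case analysis for $q=3$ and $q=4$: many types pass Euler's equation with an integral $V$ and integral $f_k$ yet support no vertex-transitive polytope, and these ``near misses'' must be ruled out individually. The parity argument is delicate precisely when a face size occurs more than once in $\tau$, since then the position of a given face in $\tau$ need not be the same at all its vertices; such cases require the packing argument instead, e.g. showing that a face of maximal size would be forced to reuse vertices when $V$ is as small as Euler's equation demands. The subtlest point is that a constant vertex type is genuinely weaker than vertex-transitivity --- the type $(3,4,4,4)$ is realized both by the (vertex-transitive) rhombicuboctahedron and by the pseudo-rhombicuboctahedron, whose combinatorial symmetry group is not transitive on vertices --- so the hypothesis on $G$, and not merely local regularity, must be invoked to finish. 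I would follow the complete bookkeeping carried out in \cite{Gruenbaum1984}.
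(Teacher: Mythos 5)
The paper does not prove this statement at all: it is quoted verbatim as a known classification and attributed to \cite{Gruenbaum1984}, so there is no internal argument to measure your attempt against. Your sketch follows the classical route one would expect behind that citation: reduce to the common vertex degree $q$ via Euler's formula (your inequality $V(6-q)\ge 12$ and the equation $V\bigl(1-\tfrac q2+\sum_k m_k/k\bigr)=2$ are both correct), introduce the cyclic vertex type $\tau$, and run the case analysis $q=3,4,5$ with the alternation/parity lemma and packing arguments, finishing with Steinitz/Whitney to pin down the combinatorial type. You also correctly flag the one genuinely delicate point, namely that a constant vertex type is strictly weaker than vertex-transitivity (the pseudo-rhombicuboctahedron with type $(3,4,4,4)$), so the hypothesis on $G$ must actually be used and not just local regularity.

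That said, as a proof your text is a roadmap rather than an argument: by your own admission ``essentially all the work'' is the elimination of the near-miss types for $q=3$ and $q=4$, and you do not carry it out but defer it back to \cite{Gruenbaum1984}. Since the paper itself treats the theorem as a black box, this is not a defect relative to the paper, but it means your proposal is not an independent proof. If you wanted a self-contained argument adequate for the way the theorem is used here, note that the paper only invokes it for terminal Fano polytopes with $v(P)\in\{6,8,12\}$ and $G$ a finite subgroup of $\GL(3,\Z)$ acting linearly; in that restricted setting the case analysis collapses to octahedra, cubes, $3$- and $4$-(anti)prisms, icosahedra, truncated tetrahedra and cuboctahedra, which can be checked directly and would spare you the full classification of isogonal polytopes.
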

	
	\begin{proposition}[\cite{Tahara}]\label{orders}
		Let $G\subset \GL(3,\Z)$ be a finite subgroup. Then the order of $G$ divides $48$. Hence, if $n$ is an integer dividing $\left|G\right|$ such that $5\le n \le 14$ , then $n = 6$, $8$ or $12$. 
	\end{proposition}
	
	\begin{proposition}\label{arch}
		Let $n= 6,8$ or $12$. Then there are exactly two Archimedean solids with $n$ vertices: the truncated tetrahedron and the cuboctahedron. Both of them have $12$ vertices.   
	\end{proposition}
	
	\begin{theorem}[\cite{Bor}, \cite{Kasprzyk2006}]\label{rank_one}
		Let $X$ be a toric Fano threefold having at worst terminal singularities with $\rk \Cl (X)=1$. Then $X$ is either $\P^3$, the quotient of $\P^3/\boldsymbol{\mu}_5$ with weights  $(1,2,3,4)$, or a weighted projective space $\P(w_0,\dots,w_3)$ with weights 
		$$
		(1,1,1,2),\, (1,1,2,3),\, (1,2,3,5),\, (1,3,4,5),\, (2,3,5,7),\, (3,4,5,7).
		$$
	\end{theorem}
	
	\section{Toric G-Fano Threefolds.}
	
	From now on, let $P$ be a $3$-dimensional terminal Fano polytope and $N\cong \Z^3$ (hence $M\cong \Z^3$). Let also $\Sigma$ be a fan associated with $P$ and $X_\Sigma$ be the associated with $\Sigma$ toric $G$-Fano threefold. 
	
	\begin{proposition} 
	In the notation above,  $\rk\Cl(X_{\Sigma})^G=1$ if and only if there exists a subgroup $H\subset N(T)$ such that $\rk\Cl(X_{\Sigma})^{H}=1$, where $N(T)$ is the normalizer of $T$ in $\Aut(X_{\Sigma})$.
	\end{proposition}
	\begin{proof}
		Let $\mathfrak{M}$ be a set of maximal tori in $\Aut^0(X_\Sigma)$, where $\Aut^0(X_\Sigma)$ is the connected identity component of $\Aut(X_\Sigma)$. The group $\Aut^0(X_\Sigma)$ acts on $\mathfrak{M}$ by conjugation and this action is transitive since $\Aut^0(X_\Sigma)$ is connected by \cite[Corollary 11.3]{Borel1991}. Thereby, for each element $g\in G$ there exists an element $h\in \Aut^0(X_\Sigma)$ such that $ghT(gh)^{-1} = T$. Now denote by $H$ the group generated by all elements $gh$. Since the action of $\Aut^0(X_\Sigma)$ on $\Cl(X_\Sigma)$ is trivial and the images of $G$ and $H$ coincide in $\Aut(X_{\Sigma})/\Aut^0(X_{\Sigma})$, we are done. 	
	\end{proof}	
	Consequently, we can assume that $G$ is a subgroup of $N(T)$. Also, for $N(T)$ we have the following split exact sequence:
	$$
	\begin{CD}
	1 @>>>T @>>> N(T) @>\nu>> W@>>> 1,
	\end{CD}
	$$
	where $W$ is a finite subgroup of $\GL(3,\Z)$, known as the Weyl group of $X_{\Sigma}$. Since the action of $G$ on $\Cl(X_{\Sigma})$ depends only on the image of $\nu$, we will assume that $G$ is a subgroup of $W$.
	
	Consider the exact sequence of vector spaces obtained from (\ref{exact}) by tensoring by $\Q$:
	$$
	\begin{CD}
	0 @>>>M\otimes \Q @>>> \Div_T(X_\Sigma)\otimes \Q @>>> \Cl(X_\Sigma) \otimes \Q @>>> 0.
	\end{CD}
	$$
	Taking the $G$-invariants gives us an isomorphism
	$$
	\Div_T(X_\Sigma)_\Q^G /M_\Q^G \cong \Cl(X_\Sigma)_\Q^G.
	$$
	
	Now we are ready to prove Theorem \ref{main}.
	We prove it by considering the terminal Fano polytopes that correspond to the toric terminal Fano threefolds. The proof will be divided into four lemmas.
	
	\begin{lemma}\label{lemma1}
		Let $\dim\Div_T(X_\Sigma)_\Q^G=4$, $\dim M_\Q^G=3$. Then $X_\Sigma$ is one of the following:
		\begin{itemize}
			\item[\textup{1)}] $\P^3$;
			\item[\textup{2)}] the quotient $\P^3/\boldsymbol{\mu}_5$ with weights  $(1,2,3,4)$;
			\item[\textup{2)}] the weighted projective space $\P(w_0,\dots,w_3)$ with weights 
		$$
		(1,1,1,2),\, (1,1,2,3),\, (1,2,3,5),\, (1,3,4,5),\, (2,3,5,7),\, (3,4,5,7);
		$$
		\end{itemize}
		and $G$ acts trivially.
	\end{lemma}
	\begin{proof}
		By assumption the group $G$ has three invariant $1$-dimensional subspaces in $N_\Q$, hence it acts trivially on $\Div_T(X_\Sigma)_\Q$. Consequently, there are exactly four rays in $\Sigma$. One can easily see that in this case $\rk \Cl (X_\Sigma)=1$, so we are done by Theorem \ref{rank_one}.
	\end{proof}
	From now on, we will assume that $\rk\Cl (X_\Sigma)>1$, i.e. $v(P)>4$.
	\begin{lemma}\label{lemma}
		Let $\dim\Div_T(X_\Sigma)_\Q^G=3$, $\dim M_\Q^G=2$ and $\rk\Cl (X_\Sigma)>1$. Then $X_{\Sigma}$ is one of the following:
			\begin{itemize}
				\item[\textup{1)}] the terminal Fano threefold of degree $343/12$ and genus $14$;
				\item[\textup{2)}] the quadric cone $Q\subset\P^4$;
				\item[\textup{3)}] the terminal Fano threefold of degree $125/3$ and genus $21$,
			\end{itemize}
		and $G\cong\CC_2$.
	\end{lemma}
	\begin{proof}
		By assumption the group $G$ has two invariant $1$-dimensional subspaces in $N_\Q$, so $G$ acts by reflection with respect to the plane $\pi$ generated by the invariant $1$-dimensional subspaces. Hence  $v(P)\leq 6$, since $\dim \Div_T(X_\Sigma)_\Q^G = 3$. Consider the following cases.
		
		\begin{case} {\sc there are no vertices of $P$ on $\pi$.}\label{cyl}
			Then $v(P)$ is even and hence $v(P)=6$. So, $P$ is a prism symmetric with respect to $\pi$ with triangle bases. By \cite{GRD} there is only one terminal Fano prism with triangle bases having the following vertices:
			$$
			(1,0,0),\, (0,1,0),\, (1,1,1),\, (-1,-1,0),\, (0,0,-1),\, (-2,-1,-1).
			$$
			However, this prism does not admit a group action satisfying our conditions.
   		\end{case}
   		
   		\begin{case} {\sc there is one vertex of $P$ on $\pi$.}\label{pyramids}
			Obviously, $v(P)=5$ and hence $P$ is a pyramid which top is on $\pi$ and its base is a convex lattice quadrangle symmetric with respect to $\pi$. There are three terminal Fano pyramids having the convex quadrangle base by \cite{GRD}:
			\begin{itemize}
				\item The pyramid with the vertices 
					\begin{equation}\label{pyr1}
					(1,0,0),\, (0,1,0),\, (1,-2,3),\, (-1,1,-2),\, (0,-2,1).	
					\end{equation} 
					This polytope determines the terminal Fano threefold of degree $343/12$ and genus $14$ (number $19$ in \cite{GRD}). The group $G\cong\CC_2$ is generated by:
						$$
							\begin{pmatrix}
							1 & -1 & -1\\
							0 & 1 & 0\\
							0 & -2 & -1\\
							\end{pmatrix}.
						$$
				
				\item The pyramid with the vertices
					\begin{equation*}
					(1,0,0),\, (0,1,0),\, (1,1,1),\,(-1,-1,0),\,(0,0,-1).
					\end{equation*}
				This polytope determines the quadric cone $Q\subset \P^4$ (number $32$ in \cite{GRD}). The group $G\cong\CC_2$ is generated by:
					$$
						\begin{pmatrix}
						1 & -1 & -1\\
						0 & 1 & 0\\
						0 & -2 & -1\\
						\end{pmatrix}.
					$$

				\item The pyramid with the vertices 
					\begin{equation}\label{pyr3}
						(1,0,0),\,(0,1,0),\,(1,1,2),\,(-1,-1,-1),\,(0,-1,1).
					\end{equation}			
					This polytope determines the terminal Fano threefold of degree $125/3$ and genus $21$ (number $33$ in \cite{GRD}). The group $G\cong\CC_2$ is generated by:
					$$
						\begin{pmatrix}
						1 & -1 & 0\\
						0 & -1 & 0\\
						0 & -1 & 1\\
						\end{pmatrix}.
					$$
			\end{itemize} 
			
   		\end{case}

		\begin{case}{\sc there are two vertices of $P$ on $\pi$.}
			Then every vertex on $\pi$ gives us the orbit. So we have only one additional orbit of order two. Thus $v(P)=4$ and we are in the case of $\rk \Cl (X_\Sigma) =1$.
	    \end{case}	
		
		The plane $\pi$ cannot contain more than two vertices, otherwise $\dim M_\Q^G \geq 3$.
	\end{proof}
	
	\begin{lemma}\label{lem}
	Let $\dim\Div_T(X_\Sigma)_\Q^G=2$, $\dim M_\Q^G=1$ and $\rk\Cl (X_\Sigma)>1$. Then $X_\Sigma$ is one of the following:
	\begin{itemize}
	\item[\textup{1)}] the quadric cone $Q\subset \P^4$ and $G\cong \CC_4$;
	\item[\textup{2)}] the terminal toric Fano threefold of degree $81/2$ and $G\cong \CC_3$;	
	\item[\textup{3)}] $\P^1\times\P^1\times\P^1$ and $G\cong \CC_3$;
	\item[\textup{4)}]$\P^1\times\P^1\times\P^1/\boldsymbol{\mu}_2$ and $G\cong \CC_3$, where $\boldsymbol{\mu}_2$ is generated by the involution that acts diagonally and faithfully on each factor; 
	\item[\textup{5)}] the complete intersection of two quadrics $V_{2,2}\subset\P^5$ and $G\cong \CC_4$.
	\end{itemize}

	\end{lemma}
	\begin{proof}
	By assumption the group $G$ has one invariant $1$-dimensional subspace in $M_\Q$ and the vertices of $P$ are contained in two parallel planes, otherwise there will be more than $2$ orbits of the vertices, which contradicts with $\dim \Div_T(X_{\Sigma})_{\Q}=2$. Denote by $F_1$, $F_2$ the facets of $P$ contained in these parallel planes. Then $v(F_i)\le 4$ by Lemma \ref{polygons}. Skipping the cases when $P$ is a terminal Fano tetrahedron, i.e. when $\rk \Cl(X_\Sigma)=1$, we have the following cases.
	
	\begin{case}{\sc $F_1$ is a point, $F_2$ is a quadrangle.} In this case $P$ is a pyramid with the quadrangle base. We already noticed in Lemma \ref{lemma}  Case \ref{pyramids} that there are three such pyramids. Straightforward computations show that there is only one pyramid which admits the desired group action. More precisely, it is the pyramid with the following vertices: 
	    $$
			(1,0,0),\, (0,1,0),\, (1,1,1),\,(-1,-1,0),\,(0,0,-1).
		$$
		Recall that this polytope determines the quadratic cone $Q$ in $\P^4$ with one singular point. The group $G\cong\CC_4$ can be generated by
	    $$
			\begin{pmatrix}
			1 & 0 & -1\\
			1 & 0 & 0\\
			1 & -1 & 0\\
			\end{pmatrix}.
		$$
	\end{case}

	\begin{case}{\sc $F_1$ is a segment, $F_2$ is a quadrangle.} In this case $P$ is the prism with the triangle bases. Notice that such polytope does exist by \cite{GRD}, it is unique and appeared in Lemma \ref{lemma} Case \ref{cyl} but now it has the desired group action. Recall that the vertices of $P$ are 
			\begin{equation}\label{81/2}
			(-1,-1,0),\, (1,0,0),\, (1,1,1),\, (-2,-1,-1),\, (0,0,-1),\, (0,1,0).
			\end{equation}
			
			This polytope determines the terminal toric Fano threefold of degree $81/2$ and genus $21$ (number $92$ in \cite{GRD}). The group $G\cong\CC_3$ can be generated by 
			$$
			\begin{pmatrix}
			-1 & 0 & 2\\
			-1 & 0 & 1\\
			0 & -1 & 1\\
			\end{pmatrix}.
			$$	

	\end{case}
	
	\begin{case}{\sc $F_1$ and $F_2$ are the triangles.}\label{octahedron}
	In this case  $P$ is a triangle prism or a triangle antiprism. The case of triangle prism was considered above.
	
	 Let $P$ be the triangle antiprism, which is the same as octahedron. Then $G$ cyclicly permutes all vertices of each facet, so must be isomorphic to $\CC_3$ and $F_1$, $F_2$ are equilateral lattice triangles, with respect to the combinatorial lattice distance. By \cite[Proposition 3]{Tahara}, up to conjugation, there are only two subgroups of order $3$ in $\GL(3,\Z)$:
		$$
			W_1=\left\langle
			\begin{pmatrix}
			1 & 0 & 0\\
			0 & 0 & -1\\
			0 & 1 & -1\\
			\end{pmatrix}\right\rangle,\
			W_2=\left\langle
			\begin{pmatrix}
			0 & 1 & 0\\
			0 & 0 & 1\\
			1 & 0 & 0\\
			\end{pmatrix}\right\rangle.
		$$
	Consider the following subcases.
	\begin{scase}{\sc $G=W_1$.}
	Then the facets are contained in the planes of the form $x=n$ for some integer $n$. So, the vertices of the facet have the following coordinates:
	$$
	(n,b,c),\,(n,-c,b-c),\,(n,-b+c,-b),
	$$
	where $b$ and $c$ are some integers. Thus, on the one hand, the area of the facet is equal to 
	$$
	\dfrac{1}{2}\left| 3b^2-3bc+3c^2\right|
	$$
	but on the other hand, it should be equal to $1/2$ which is absurd.
	\end{scase}
	
	\begin{scase}{\sc $G=W_2$.} 
	Then the centers of the facets are on the line generated by the vector $(1,1,1)$, and also $F_1$, $F_2$ are contained in the planes of the form $x+y+z=n$ for $n=\{-2,-1,1,2\}$. So, the vertices of the facet are
	$$
	(a,b,n-a-b),\,(b,n-a-b,a),\,(n-a-b,a,b).
	$$
	Hence, we have the following equation for the area of the facet 
	$$
	\dfrac{\sqrt{3}}{2}\left| 3a^2+3b^2+3ab-3an-3bn+n^2\right|=\dfrac{\sqrt{3}}{2}.
	$$
	Straightforward computations show that we have only the following solutions:
	\begin{itemize}
		\item $n=2,\ a=1,\, b=1$;
		\item $n=1,\, a=1,\, b=0$;
		\item $n=-1,\, a=-1,\, b=0$;
		\item $n=-2,\, a=-1,\, b=-1$.
	\end{itemize}
	Forming from the triangles defined by the solutions above the triangle antiprisms we get the following:
	\begin{itemize}
		\item the polytope with the vertices 
			$$
			\pm(1,0,0),\, \pm(0,1,0),\, \pm(0,0,1),
			$$
			which determines $\P^1\times\P^1\times\P^1$ (number $62$ in \cite{GRD});
		\item the polytope with the vertices 
			$$
			\pm (1,1,0),\, \pm (1,0,1),\, \pm (0,1,1),
			$$
			which determines $\P^1\times\P^1\times\P^1/\boldsymbol{\mu}_2$ (number $47$ in \cite{GRD}).	
	\end{itemize}
	\end{scase}
	\end{case}
	
	\begin{case}{\sc $F_1$ and $F_2$ are quadrangles.}\label{cube}
	In this case $P$ is a quadrangle prism or a quadrangle antiprism. But there are no terminal Fano quadrangle antiprisms by \cite{GRD}. Let $P$ be terminal Fano quadrangle prism. By \cite{GRD} such prism exists, it is unique and its vertices are
	$$
	\pm (1,0,0),\, \pm(0,0,1),\, \pm(1,1,1),\, \pm(0,1,0).
	$$
	This polytope determines the intersection of two quadrics $V_{2,2}\subset\P^5$ (number $297$ in \cite{GRD}) having six isolated singular points, which can be given by the equations (see \cite[(7.5)]{Prokhorov-GFano-1})
	$$
	x_0^2-x_1^2=x_2^2-x_3^2=x_4^2-x_5^2.
	$$
    The group $G\cong \CC_4$ can be generated by
			$$
			\begin{pmatrix}
			0 & 1 & -1\\
			-1 & 1 & 0\\
			0 & 1 & 0\\
			\end{pmatrix}.
			$$
	
	\end{case}
	\end{proof}
	
	\begin{lemma}\label{gap}
		Let $\dim\Div_T(X_\Sigma)_\Q^G=1$, $\dim M_\Q^G=0$ and $\rk\Cl (X_\Sigma)>1$. Then $X_\Sigma$ is one of the following:
		\begin{itemize}
	\item[\textup{1)}] $\P^1\times\P^1\times\P^1$;
	\item[\textup{2)}] $\P^1\times\P^1\times\P^1/\boldsymbol{\mu}_2$;
	\item[\textup{3)}] the complete intersection of two quadrics $V_{2,2}\subset\P^5$;
	\item[\textup{4)}] the divisor of type $(1,1,1,1)$ in $(\P^1)^4$,
	\end{itemize}
	
	and $G\cong\S_4\times\CC_2$.
	\end{lemma}
	\begin{proof}
	By assumption the group $G$ has no invariant subspaces in $M_\Q$ and acts transitively on the vertices of $P$. By Lemma \ref{lemma_bat} we know that $v(P)\leq 14$. Hence, $5\leq v(P)\leq 14$. Thereby, by Proposition \ref{orders} we conclude that $v(P)=6,\,8,\,12$. Combining together Corollary \ref{facets_corollary} and Theorem \ref{grum} we have the following cases.
	
	\begin{case} {\sc  $P$ combinatorially is a Platonic solid.}
	
	\begin{scase}{\sc $v(P)=6$.} 
	Then $P$ combinatorially is an octahedron. Since $G$ acts transitively, it contains a subgroup of order $3$ which cyclicly permutes the vertices of the triangle facet. But the octahedrons with such group action were already considered in Lemma \ref{lem} Case \ref{octahedron}. Thus, we have no new varieties in this subcase. However, the described tetrahedrons also admit the group actions with $\dim\Div_T(X_\Sigma)_\Q^G=1$ and $\dim M_\Q^G=0$. More precisely, for $\P^1\times\P^1\times\P^1$ the group $G$ can be generated by
	$$
		\begin{pmatrix}
			0 & 0 & 1\\
			0 & 1 & 0\\
			-1 & 0 & 0\\
		\end{pmatrix},\,
		\begin{pmatrix}
			-1 & 0 & 0\\
			0 & 0 & -1\\
			0 & -1 & 0\\
		\end{pmatrix},\,
		\begin{pmatrix}
			-1 & 0 & 0\\
			0 & -1 & 0\\
			0 & 0 & -1\\
		\end{pmatrix},
	$$
	and for $\P^1\times\P^1\times\P^1/\boldsymbol{\mu}_2$ the group $G$ can be generated by
	$$
		\begin{pmatrix}
			0 & 1 & 0\\
			0 & 1 & -1\\
			-1 & 1 & 0\\
		\end{pmatrix},\,
		\begin{pmatrix}
			0 & 0 & -1\\
			0 & -1 & 0\\
			-1 & 0 & 0\\
		\end{pmatrix},\,
		\begin{pmatrix}
			-1 & 0 & 0\\
			0 & -1 & 0\\
			0 & 0 & -1\\
		\end{pmatrix}.
	$$
	For both cases $G\cong \S_4 \times \CC_2$.
	\end{scase}
	
	\begin{scase}{\sc $v(P)=8$.} 
	Then $P$ combinatorially is a cube. Such polytope does exist, it is unique and was considered in Lemma \ref{lem} Case \ref{cube}. This polytope also admits the desired action for the group $G\cong\S_4 \times \CC_2$ generated by
  		 $$
			\begin{pmatrix}
			0 & 1 & -1\\
			-1 & 1 & 0\\
			0 & 1 & 0\\
			\end{pmatrix},\,
			\begin{pmatrix}
			1 & -1 & 0\\
			0 & -1 & 0\\
			0 & -1 & -1\\
			\end{pmatrix},\,
			\begin{pmatrix}
			-1 & 0 & 0\\
			0 & -1 & 0\\
			0 & 0 & -1\\
		   \end{pmatrix}.
		$$
	\end{scase}
	
	\begin{scase}{\sc $v(P)=12$.} 
	Then $P$ combinatorially is an icosahedron. But terminal Fano icosahedron does not exist by \cite{GRD}.
	\end{scase}
	\end{case}
	
	\begin{case}{\sc  $P$ combinatorially is an Archimedean solid.}
	By Proposition \ref{arch} there exist only two Archimedean solids with $6$, $8$, $12$ vertices: the truncated tetrahedron and the cuboctahedron. 
	
	The truncated tetrahedron cannot be a terminal Fano polytope because it has  hexagon facets.
	
	 By \cite{GRD} there exists only one terminal Fano cuboctahedron with the following vertices: 
	 \begin{equation*}
	 	\begin{split}
	 		\pm (1, 0, 0),\, \pm (0, 1, 0),\, \pm (0, 0, 1),\, \pm (1, 1, 0),\,\pm (1, 0, 1),\, \pm (0, -1, 1).\end{split}
	 \end{equation*}
	This polytope admits the desired group action for $G\cong\S_4\times \CC_2$ which can be generated by
	$$
		\begin{pmatrix}
			0 & 0 & 1\\
			-1 & 1 & 1\\
			0 & -1 & 0\\
		\end{pmatrix},\,
		\begin{pmatrix}
			0 & 0 & 1\\
			0 & -1 & 0\\
			1 & 0 & 0\\
		\end{pmatrix},\,
		\begin{pmatrix}
			-1 & 0 & 0\\
			0 & -1 & 0\\
			0 & 0 & -1\\
		\end{pmatrix}.
	$$
	 It determines the singular divisor of type $(1,1,1,1)$ in $(\P^1)^4$ (number $625$ in \cite{GRD}). If we assume that $(x_i:y_i)$ are homogeneous coordinates on the $i$-th factor of $(\P^1)^4$ for $i=1,\ldots,4$, then this variety can be defined by the following equation
	 $$
	 x_1x_2x_3x_4-y_1y_2y_3y_4=0
	 $$ 
	\end{case}
	
	\begin{case}{\sc  $P$ combinatorially is a prism.}
	By Lemma \ref{polygons} we should consider only the prisms with the triangular and quadrangular bases. 
	
	We already know that there is only one prism with the triangular bases which appeared in Lemma \ref{lem}. However, now this prism does not admit the group actions satisfying our conditions. The case of the prism with quadrangular bases was considered above.	
	\end{case}

	\begin{case}{\sc  $P$ combinatorially is an antiprism.}
	Similarly we should consider only the cases of the antiprism with the triangular and the quadrangular bases. The triangular base case is exactly the case of the octahedron. And there are no terminal Fano antiprisms with quadrangular bases by \cite{GRD}.
	\end{case}
	\end{proof}

Now the main result follows from Lemma \ref{lemma1}, Lemma \ref{lemma}, Lemma \ref{lem} and Lemma \ref{gap}.

\bibliography{References}
\bibliographystyle{alpha}	

\end{document}